\documentclass[11pt]{amsart}
\usepackage{amsmath,amssymb,amsthm}
\usepackage[margin=1in]{geometry}
\usepackage[hidelinks]{hyperref}
\usepackage{graphicx}
\usepackage{tikz}
\theoremstyle{plain}
\newtheorem{theorem}{Theorem}[section]

\newtheorem{lemma}[theorem]{Lemma}

\theoremstyle{definition}

\theoremstyle{remark}
\newtheorem{remark}[theorem]{Remark}

\usepackage{color}

\newcommand{\g}{\mathfrak{g}}
\newcommand{\h}{\mathfrak{h}}
\newcommand{\Phire}{\Phi_{\mathrm{re}}}
\newcommand{\Phiim}{\Phi_{\mathrm{im}}}
\newcommand{\Phiimp}{\Phi_{\mathrm{im}}^{+}}

\title[Finite root sums]{Finite Root Sums in Symmetrizable Kac--Moody Algebras}

\author[A. Douglas]{Andrew Douglas$^{1,2,3}$}
\address[]{$^1$Department of Mathematics, New York City College of Technology, City University of New York, Brooklyn, NY, USA.}
\email{adouglas@citytech.cuny.edu}
\address[]{$^2$Ph.D. Programs in Mathematics and Physics, CUNY Graduate Center, City University of New York, New York, NY, USA.}
\address[]{$^3$Department of Mathematics, University of Toronto, Toronto, ON, Canada}

\keywords{Symmetrizable Kac--Moody algebra, root system, closed subset,
finite-sum closure, wide subalgebra}
\subjclass[2020]{Primary 17B67; Secondary 17B22, 17B10}

\begin{document}

\begin{abstract}
Let $\g$ be a symmetrizable Kac--Moody algebra with root system $\Phi$. A subset $S\subseteq\Phi$ is \emph{closed} if $\alpha,\beta\in S$ and
$\alpha+\beta\in\Phi$ imply $\alpha+\beta\in S$. For finite and affine
Kac--Moody algebras, this closure property is known to extend from pairs
to arbitrary finite sums that are roots. We prove that every closed subset of the root system of any
symmetrizable Kac--Moody algebra is closed under arbitrary finite
sums that are roots.
The proof uses real-root strings and simultaneous separation
of positive imaginary roots, together with an addition lemma for fundamental imaginary roots. As an application, we extend a necessary condition
for wideness of regular subalgebras from finite and affine types to all symmetrizable
Kac--Moody algebras: the symmetric closure of the roots associated
to a wide regular subalgebra is the entire root system.
\end{abstract}

\maketitle

\section{Introduction}

Let $\g$ be a symmetrizable Kac--Moody algebra with Cartan subalgebra
$\h$, root system $\Phi\subset\h^*$, and invariant symmetric bilinear form
$(\cdot\mid\cdot)$. Let $A$ be the generalized Cartan matrix of $\g$, of size
$\ell\times\ell$. Fix a set of positive roots $\Phi^+$, with
corresponding simple roots $\Pi=\{\alpha_1,\dots,\alpha_\ell\}$. Write
\[
\Phire=\{\alpha\in\Phi:(\alpha\mid\alpha)>0\},
\qquad
\Phiim=\{\alpha\in\Phi:(\alpha\mid\alpha)\le0\}
\]
for the real and imaginary roots, and let
$\Phiimp=\Phiim\cap\Phi^+$. A subset $S\subseteq\Phi$ is \emph{closed} if
$\alpha,\beta\in S$ and $\alpha+\beta\in\Phi$ imply $\alpha+\beta\in S$.

For finite type \cite[Lem.~3.2]{DouglasRepka} and affine type
\cite[Prop.~3.4]{DouglasAli}, this closure property was shown to extend
from pairs to arbitrary finite sums that are roots. We say that $\Phi$
has the \emph{finite-sum closure property}, or simply \emph{finite-sum
closure}, if for every closed subset $S\subseteq\Phi$,
\[
\beta_1,\dots,\beta_n\in S,\qquad
\eta:=\beta_1+\cdots+\beta_n\in\Phi
\quad\Longrightarrow\quad
\eta\in S.
\]

Our main theorem shows that every symmetrizable Kac--Moody algebra
has the finite-sum closure property.

\medskip
\noindent\textbf{Main theorem.}
\emph{Let $\g$ be a symmetrizable Kac--Moody algebra with root system
$\Phi$. Then $\Phi$ has the finite-sum closure property.}
\medskip

The proof proceeds by induction on the number of summands.
The cases involving real roots use real-root strings and
negative-pairing arguments. For imaginary roots, we use Marquis's simultaneous separation
lemma \cite[Lem.~3.4]{Marquis} and an addition lemma for
fundamental imaginary roots. Together, these handle imaginary
summands of both signs.

As an application, we extend a necessary condition for wideness of
regular subalgebras from finite and affine types
\cite{DouglasRepka,DouglasAli} to all symmetrizable Kac--Moody algebras. We show that if $\mathfrak{s}$ is a wide regular subalgebra and $T$ is its
associated subset of roots, then
\[
[T\cup(-T)]=\Phi,
\]
where brackets denote closure. 

The article is organized as follows. Section~\ref{prelim}
collects the required root-theoretic facts and proves the
addition lemma for fundamental imaginary roots.
Section~\ref{main} proves the finite-sum closure property.
Section~\ref{sec:wide} gives the application to wide regular
subalgebras.

\section{Root-theoretic preliminaries}\label{prelim}

We write $Q=\bigoplus_{i=1}^{\ell}\mathbb Z\alpha_i$ for the root
lattice, $Q^+=\bigoplus_{i=1}^{\ell}\mathbb Z_{\ge0}\alpha_i$, and $W$
for the Weyl group. For $\gamma=\sum_i c_i\alpha_i\in Q$, its support
$\operatorname{supp}(\gamma)$ is the subdiagram induced by the vertices
$\alpha_i$ with $c_i\ne0$. Unions of supports are understood as
the subdiagrams induced by the unions of their vertex sets.
Every root has connected support, and $W$
preserves $\Phi$, $\Phi_{\mathrm{im}}^+$, and
$\Phi_{\mathrm{im}}^-$ \cite[\S\S2.3--2.4]{Marquis}.

Following \cite[\S2.4]{Marquis}, set
\[
K_0=\{\gamma\in Q^+:\gamma(\alpha_i^\vee)\le0
\text{ for every }i\},
\qquad
K=K_0\cap\Phi_{\mathrm{im}}^+.
\]
The inequalities defining $K_0$ are equivalent to
$(\gamma\mid\alpha_i)\le0$ for every $i$. The standard description of
positive imaginary roots is
\[
\Phi_{\mathrm{im}}^+
=W\cdot\{\gamma\in K_0\setminus\{0\}:
\operatorname{supp}(\gamma)\text{ is connected}\};
\]
see \cite[\S2.4, Eq.~(2.9)]{Marquis}. In particular, every nonzero
vector in $K_0$ with connected support belongs to $K$.

\begin{lemma}[{\cite[Prop.~5.1]{Kac}}]\label{lem:kac51}
Let $\alpha \in \Phire$ and $\beta \in \Phi$ be non-proportional to $\alpha$. Then there exist nonnegative integers $p,\,q$ such that $\beta+k\alpha \in \Phi$ for  $-p \le k \le q$  with $p-q = \beta(\alpha^\vee)$. In particular, if $(\alpha\mid\beta)>0$ then $p\ge1$, so $\beta-\alpha \in \Phi$.
\end{lemma}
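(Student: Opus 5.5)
This is Kac's Proposition 5.1, so I would follow the standard $\mathfrak{sl}_2$-representation argument and not rely on any combinatorics of $\Phi$. Write $\alpha$ as a real root, so $\alpha=w\alpha_i$ for some $w\in W$ and simple root $\alpha_i$. The root spaces $\g_{\pm\alpha}$ are one-dimensional. Choosing $e\in\g_\alpha$ and $f\in\g_{-\alpha}$ suitably normalized, the elements $e,f,h=\alpha^\vee$ span a copy $\mathfrak{s}_\alpha\cong\mathfrak{sl}_2$ acting locally nilpotently by $\mathrm{ad}$. I would obtain this by transporting the standard triple $e_i,f_i,\alpha_i^\vee$ by a lift of $w$ to $\mathrm{Aut}(\g)$, which preserves root-space dimensions.

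The $\alpha$-string through $\beta$ is $\beta+\mathbb Z\alpha$. Consider
\[
M=\bigoplus_{k\in\mathbb Z}\g_{\beta+k\alpha}.
\]
This is an $\mathfrak{s}_\alpha$-module under $\mathrm{ad}$. Because $\beta$ is not proportional to $\alpha$, no term $\beta+k\alpha$ is zero or a multiple of $\alpha$, so $\h$ does not appear and each summand is an honest root space. The $h$-eigenvalue on $\g_{\beta+k\alpha}$ is $\beta(\alpha^\vee)+2k$.

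Local nilpotence of $\mathrm{ad}\,e$ and $\mathrm{ad}\,f$ together with the finite-dimensionality of weight spaces makes $M$ a direct sum of finite-dimensional irreducible $\mathfrak{sl}_2$-modules. Since $\beta\in\Phi$, the space $\g_\beta\ne0$, and each weight vector lies in a finite-dimensional $\mathfrak{sl}_2$-submodule. That sum of submodules is graded by the eigenvalue $\beta(\alpha^\vee)+2k$.

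The key step is unbrokenness of the string, and I expect it to be the main obstacle. Finiteness is immediate from integrability. For unbrokenness, one route is to use that the weights of $M$ with nonzero multiplicity are stable under $k\mapsto -k-\beta(\alpha^\vee)$, since this is the reflection $s_\alpha$. Combine this with the fact that $\dim\g_{\beta+k\alpha}$ is unimodal along the string, which follows from the decomposition into irreducibles. Concretely, the multiplicities $m_k=\dim\g_{\beta+k\alpha}$ are symmetric about the center $-\beta(\alpha^\vee)/2$ and weakly increase toward the center, because each irreducible contributes an unbroken symmetric interval. Hence the support $\{k: m_k>0\}$ is an interval $[-p,q]$, and symmetry gives $-p+q=-\beta(\alpha^\vee)$, i.e.\ $p-q=\beta(\alpha^\vee)$. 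Since $m_0>0$, we have $0\in[-p,q]$, so $p,q\ge0$.

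For the final claim, suppose $(\alpha\mid\beta)>0$. Because $\alpha$ is real, $\beta(\alpha^\vee)=2(\beta\mid\alpha)/(\alpha\mid\alpha)$ has the same sign as $(\alpha\mid\beta)$, so it is positive. Then $p=q+\beta(\alpha^\vee)\ge1$, and $k=-1$ lies in the string, giving $\beta-\alpha\in\Phi$.
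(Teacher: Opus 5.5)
Your proposal is correct and is essentially the standard $\mathfrak{sl}_2$-representation argument behind Kac's Proposition~5.1, which the paper cites without reproving. One small precision: finiteness of the string follows from the finite-dimensionality of root spaces, not from integrability alone. Every irreducible summand of $M$ meets the single root space of $h$-eigenvalue $0$ or $1$ (according to parity), so $M$ has only finitely many summands.
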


\begin{lemma}[{\cite[Prop.~5.1]{Kac} and \cite[Prop.~5.5]{Kac}}]
\label{lem:multiples}
For $\alpha \in \Phire$ and $n\in\mathbb Z$, $n \alpha \in \Phi$ if and only if $n=\pm 1$.
If $\alpha\in \Phiim$ and $r\in \mathbb{Q}\setminus \{0\}$ such that $r\alpha \in Q$, then $r \alpha \in \Phiim$.  In particular, if $\alpha\in \Phiim$ and $n\in \mathbb{Z}\setminus \{0\}$, then $n\alpha \in \Phiim$.
\end{lemma}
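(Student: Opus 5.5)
For the real-root assertion I would reduce to a simple root by the Weyl group action; for the imaginary assertion I would use the description $\Phiim^+=W\cdot\{\gamma\in K_0\setminus\{0\}:\operatorname{supp}(\gamma)\text{ connected}\}$ recalled above. The tool in both parts is that $W$ acts linearly on $\h^*$, preserves $Q$, $\Phi$ and the form $(\cdot\mid\cdot)$. Hence for $w\in W$ we have $n\alpha\in\Phi$ iff $n\,w\alpha\in\Phi$, and $r\alpha\in Q$ iff $r\,w\alpha\in Q$.

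\emph{Real roots.} Every real root is $W$-conjugate to a simple root, which is the standard definition of $\Phire$ in Kac and agrees with the norm description for symmetrizable $\g$. So write $\alpha=w\alpha_i$. It then suffices to show that $n\alpha_i\in\Phi$ only for $n=\pm1$. The root spaces $\g_{n\alpha_i}$ with $n>0$ lie in the subalgebra $\mathfrak n^+$ generated by the $e_j$. An iterated bracket of weight $n\alpha_i$ involves only $e_i$, and since $[e_i,e_i]=0$ such brackets vanish for $n\ge2$. Thus $\g_{n\alpha_i}=0$ for $n\ge2$, and the case $n\le-2$ follows symmetrically via $\mathfrak n^-$ (or via $\Phi=-\Phi$). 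The case $n=0$ is excluded since $0\notin\Phi$. Conversely, $\pm\alpha_i\in\Phi$.

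\emph{Imaginary roots.} Since $\Phiim=\Phiim^+\cup(-\Phiim^+)$ and the statement is invariant under replacing $(\alpha,r)$ by $(-\alpha,-r)$ or by $(\alpha,-r)$ followed by negation, I may assume $\alpha\in\Phiim^+$ and $r>0$. Choose $w\in W$ with $\gamma:=w\alpha\in K_0\setminus\{0\}$ of connected support. Then $r\gamma=w(r\alpha)\in Q$. Since $r>0$ and $\gamma\in Q^+$, we get $r\gamma\in Q^+$. Moreover $(r\gamma\mid\alpha_j)=r(\gamma\mid\alpha_j)\le0$ for all $j$, so $r\gamma\in K_0$. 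It is nonzero and has the same support as $\gamma$, which is connected. By the remark after the description of $\Phiim^+$, $r\gamma\in K\subseteq\Phiim^+$. Applying $w^{-1}$ gives $r\alpha\in\Phiim^+$. As a check, $(r\alpha\mid r\alpha)=r^2(\alpha\mid\alpha)\le0$, consistent with imaginarity. The integer case is the specialization $r=n$.

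The main obstacle is the real case: the fact that $2\alpha_i\notin\Phi$ relies on the presentation of $\g$ (the Serre relations, or at least the structure of $\mathfrak n^\pm$ as generated by the $e_j$, $f_j$). It also relies on identifying the norm-defined $\Phire$ with $W\cdot\Pi$, which is where symmetrizability enters. Everything in the imaginary case is formal once the $K_0$ description is granted.
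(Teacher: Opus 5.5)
Your argument is correct, and it is essentially the paper's: the paper gives no proof of its own, only citations to Kac's Propositions~5.1 and~5.5. Those are proved exactly as you do, by conjugating a real root to a simple root $\alpha_i$ and using $[e_i,e_i]=0$ to get $\g_{n\alpha_i}=0$ for $|n|\ge 2$, and by moving a positive imaginary root into the fundamental set $K$ via $W$ and noting that a positive rescaling lying in $Q$ stays in $K$. Your remark that the norm definition of $\Phire$ agrees with $W\cdot\Pi$ in the symmetrizable case correctly handles the one point where the paper's conventions differ from Kac's.
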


\begin{lemma}[{\cite[Thm.~4.4]{Marquis}}]\label{lem:marquis}
Let $\alpha, \beta\in \Phi$. If $(\alpha \mid \beta)<0$ then $[x,y]\neq 0$ for every nonzero $x\in \mathfrak{g}_\alpha$  and $y\in \mathfrak{g}_\beta$ such that $\mathbb{C}x\neq \mathbb{C}y$.
In particular, if $\alpha,\beta \in \Phi$ satisfy $\alpha\ne\beta$, $\alpha+\beta\ne0$, and $(\alpha\mid\beta)<0$, then $\alpha+\beta \in \Phi$.
\end{lemma}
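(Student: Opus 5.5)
The plan is to take the first assertion as given from \cite[Thm.~4.4]{Marquis} and deduce the ``in particular'' statement from it by a short weight-space argument. I also indicate how the first assertion itself can be approached, since the real-root case is elementary.

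\emph{Deducing the second assertion.} Let $\alpha,\beta\in\Phi$ with $\alpha\ne\beta$, $\alpha+\beta\ne0$ and $(\alpha\mid\beta)<0$. Choose any nonzero $x\in\g_\alpha$ and $y\in\g_\beta$; these exist because $\alpha,\beta$ are roots. Since $\alpha\ne\beta$, the root spaces $\g_\alpha$ and $\g_\beta$ are distinct $\h$-weight spaces, so $x$ and $y$ are linearly independent. In particular $\mathbb C x\ne\mathbb C y$, and the first assertion gives $[x,y]\ne0$. Now $[x,y]\in\g_{\alpha+\beta}$, so $\g_{\alpha+\beta}\ne0$. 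Since $\alpha+\beta\ne0$, the weight space $\g_{\alpha+\beta}$ does not lie in $\h$, and therefore $\alpha+\beta\in\Phi$. The hypothesis $\alpha+\beta\ne0$ is genuinely needed: when $\alpha+\beta=0$ the bracket lands in $\h$ and says nothing about roots.

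\emph{The first assertion when one root is real.} Say $\alpha\in\Phire$, so $\g_\alpha=\mathbb C e_\alpha$ is one-dimensional. Complete $e_\alpha$ to an $\mathfrak{sl}_2$-triple $(e_\alpha,f_\alpha,\alpha^\vee)$. The space $\bigoplus_k\g_{\beta+k\alpha}$ is then an integrable $\mathfrak{sl}_2$-module, a direct sum of finite-dimensional irreducibles. Every vector of $\g_\beta$ has $\alpha^\vee$-weight $\beta(\alpha^\vee)$. This weight is negative because $(\alpha\mid\beta)<0$ and $(\alpha\mid\alpha)>0$. In a finite-dimensional $\mathfrak{sl}_2$-module, $e$ is injective on weight spaces of negative weight. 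Hence $\operatorname{ad}e_\alpha$ is injective on $\g_\beta$, which gives $[x,y]\ne0$ for $y\ne0$. If instead $\beta$ is real, the same argument applies with the roles of $\alpha$ and $\beta$ swapped. The condition $\mathbb C x\ne\mathbb C y$ only excludes the trivial case $\alpha=\beta$ with $x\parallel y$.

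\emph{The first assertion when both roots are imaginary.} This is the main obstacle. Root spaces are then multidimensional and no $\mathfrak{sl}_2$ is available. The approach I would try first follows Marquis. Use $W$-invariance to conjugate $\alpha$ into $K$, and reduce by the real-root string arguments above. Then use the symmetrizable structure, namely the invariant form and Kac's Casimir operator, to show that a nonzero pair $x\in\g_\alpha$, $y\in\g_\beta$ with $[x,y]=0$ and $(\alpha\mid\beta)<0$ forces $x$ and $y$ to be proportional. Working out this step in detail is exactly the content of \cite[Thm.~4.4]{Marquis}, so in the paper I would cite it rather than reprove it.
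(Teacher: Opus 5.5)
Your proposal is correct and follows the paper, which gives no argument beyond citing Marquis's Theorem~4.4 for the first assertion. Your derivation of the ``in particular'' clause is exactly the implicit one-line deduction: nonzero vectors of the distinct weight spaces $\g_\alpha,\g_\beta$ are linearly independent, so $0\ne[x,y]\in\g_{\alpha+\beta}$ with $\alpha+\beta\ne0$. Your $\mathfrak{sl}_2$-argument for the case where one root is real is also correct, but it is an optional extra.
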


\begin{lemma}[{\cite[Lem.~3.6(1),(3)]{Marquis}}]\label{lem:negpairing}
For $\alpha,\beta\in\Phiimp$, $(\alpha\mid\beta)\le0$.
If $(\alpha\mid\beta)<0$, then $\alpha+\beta\in\Phiimp$.
\end{lemma}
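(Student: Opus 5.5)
The statement is Lemma~\ref{lem:negpairing}: for $\alpha,\beta\in\Phiimp$ we have $(\alpha\mid\beta)\le0$, and if $(\alpha\mid\beta)<0$ then $\alpha+\beta\in\Phiimp$. The plan is to reduce to the fundamental chamber $K$ using the $W$-action, and then to use Lemma~\ref{lem:marquis} for the addition statement.

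\textbf{Step 1 (reduction).} The form $(\cdot\mid\cdot)$ is $W$-invariant, and $W$ preserves $\Phiimp$. Choose $w\in W$ with $w\alpha\in K$, which the description $\Phiimp=W\cdot\{\gamma\in K_0\setminus\{0\}:\operatorname{supp}(\gamma)\text{ connected}\}$ allows. Replacing $(\alpha,\beta)$ by $(w\alpha,w\beta)$ changes neither hypotheses nor conclusions, since $w(\alpha+\beta)=w\alpha+w\beta$. So we may assume $\alpha\in K$.

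\textbf{Step 2 (sign of the pairing).} Write $\beta=\sum_i c_i\alpha_i$ with all $c_i\ge0$, which holds because $\beta$ is positive. Then
\[
(\alpha\mid\beta)=\sum_i c_i(\alpha\mid\alpha_i)\le0,
\]
because $(\alpha\mid\alpha_i)\le0$ for every $i$ by the definition of $K_0$. This proves the first claim. Only $\alpha\in K$ and $\beta\in Q^+$ are needed here; the imaginarity of $\beta$ is not used.

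\textbf{Step 3 (addition).} Suppose $(\alpha\mid\beta)<0$. Then $\alpha\neq\beta$, since $(\alpha\mid\alpha)<0$ would be required for equality but both are imaginary and the strict inequality forbids nothing there. This point is the one that needs care. Indeed, $(\alpha\mid\alpha)\le 0$ does not by itself exclude $\alpha=\beta$ when $(\alpha\mid\alpha)<0$, so an argument is needed for that case. Note also that $\alpha+\beta\neq0$, since both roots are positive.

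\emph{Case $\alpha\ne\beta$.} Lemma~\ref{lem:marquis} gives $\alpha+\beta\in\Phi$.

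\emph{Case $\alpha=\beta$.} Then $\alpha+\beta=2\alpha$, which lies in $\Phiim$ by Lemma~\ref{lem:multiples}.

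In both cases $\alpha+\beta$ is positive. It remains to show it is imaginary. Compute
\[
(\alpha+\beta\mid\alpha+\beta)=(\alpha\mid\alpha)+2(\alpha\mid\beta)+(\beta\mid\beta).
\]
The first and last terms are $\le0$ because $\alpha,\beta$ are imaginary, and the middle term is $<0$ by assumption. Hence the sum is negative, so $\alpha+\beta\in\Phiimp$.

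The main obstacle is invoking Lemma~\ref{lem:marquis}. It rests on the deep nonvanishing of brackets, and the subtle case $\alpha=\beta$ is where its hypotheses fail; this case is handled separately above by Lemma~\ref{lem:multiples}. The rest is formal.
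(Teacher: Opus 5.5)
Your argument is correct as a deduction from the lemmas quoted in the paper. The paper itself gives no proof of this lemma; it only cites \cite[Lem.~3.6]{Marquis}.

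\textbf{Part (1).} This is the standard argument, and it is the same reasoning the paper uses in the first line of the proof of Lemma~\ref{lem:connected-addition}.

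\textbf{Part (3).} Here you take a genuinely different and much heavier route. You obtain $\alpha+\beta\in\Phi$ from the bracket-nonvanishing theorem (Lemma~\ref{lem:marquis}, i.e.\ \cite[Thm.~4.4]{Marquis}), with Lemma~\ref{lem:multiples} covering $\alpha=\beta$. You then read off imaginarity from $(\alpha+\beta\mid\alpha+\beta)<0$. That is logically fine given those results. However, in Marquis's paper Lemma~3.6 is a Section~3 preliminary and Theorem~4.4 comes later. Deriving the lemma from the theorem therefore inverts the source's order of development. It could be circular if the theorem's proof uses the lemma, so it is not a self-contained proof.

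An elementary proof of (3) uses only $W$-invariance and the description $\Phiimp=W\cdot\{\gamma\in K_0\setminus\{0\}:\operatorname{supp}(\gamma)\text{ connected}\}$. Induct on $\operatorname{ht}(\alpha+\beta)$.
\begin{itemize}
\item Write $(\alpha\mid\beta)=\sum_{i,j}a_ib_j(\alpha_i\mid\alpha_j)<0$. If the supports were disjoint, every term would have $i\ne j$ and be $\le0$, and a negative term needs an edge. So the supports meet or are joined by an edge, and $\operatorname{supp}(\alpha+\beta)$ is connected.
\item If $\alpha+\beta\in K_0$, it therefore lies in $K$.
\item Otherwise choose $s_i$ with $(\alpha+\beta)(\alpha_i^\vee)>0$. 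Then $s_i\alpha,s_i\beta\in\Phiimp$ with the same negative pairing, and $s_i(\alpha+\beta)\in Q^+$ has strictly smaller height. Induction finishes the argument.
\end{itemize}
This route also avoids the separate case $\alpha=\beta$.

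\textbf{A correction.} Delete the opening sentences of your Step 3. The assertion ``Then $\alpha\neq\beta$'' is false in general: take $\alpha=\beta\in K$ with $(\alpha\mid\alpha)<0$, e.g.\ $\alpha_1+\alpha_2$ for the Cartan matrix with off-diagonal entries $-3$. The justification offered for that assertion is not meaningful. Your subsequent case split is what actually handles this point.
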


\begin{lemma}[{\cite[Lem.~3.2]{Marquis}}]\label{lem:fundamental-representative}
For every $\mu\in\Phiimp$, there exists $v\in W$ such that
$v\mu\in K$ and
\[
\operatorname{supp}(v)\subseteq\operatorname{supp}(\mu),
\qquad
\operatorname{supp}(v\mu)\subseteq\operatorname{supp}(\mu).
\]
Here $\operatorname{supp}(v)$ is the subdiagram induced by the
vertices whose simple reflections occur in a reduced expression
for $v$; it is independent of the chosen reduced expression.
\end{lemma}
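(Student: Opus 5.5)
The plan is to reach $K$ by the usual height-descent argument, applying at each step a simple reflection in a vertex of the current support. Set $\mu_0=\mu\in\Phiimp$. Suppose $\mu_j$ has been constructed and lies in $\Phiimp$. If $\mu_j\in K_0$, stop: then $\mu_j\in K_0\cap\Phiimp=K$. Otherwise pick an index $i$ with $\mu_j(\alpha_i^\vee)>0$ and put $\mu_{j+1}=s_i\mu_j=\mu_j-\mu_j(\alpha_i^\vee)\alpha_i$.

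Since $W$ preserves $\Phiimp$, we get $\mu_{j+1}\in\Phiimp$, so in particular $\mu_{j+1}\in Q^+$. Its height is strictly smaller than that of $\mu_j$, because the coefficient of $\alpha_i$ drops by the positive integer $\mu_j(\alpha_i^\vee)$. Heights of elements of $Q^+\setminus\{0\}$ are positive integers, so the process terminates after finitely many steps, say at $\mu_k=s_{i_k}\cdots s_{i_1}\mu\in K$. Put $v=s_{i_k}\cdots s_{i_1}$.

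The key support observation is that any index $i$ with $\mu_j(\alpha_i^\vee)>0$ must lie in $\operatorname{supp}(\mu_j)$. Indeed, write $\mu_j=\sum_m c_m\alpha_m$ with $c_m\ge0$. If $c_i=0$, then $\mu_j(\alpha_i^\vee)=\sum_{m\ne i}c_m a_{im}\le0$, since the off-diagonal entries of the generalized Cartan matrix are nonpositive. This contradicts the choice of $i$.

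Two consequences follow. First, $\operatorname{supp}(\mu_{j+1})\subseteq\operatorname{supp}(\mu_j)$, because subtracting a multiple of $\alpha_i$ with $i$ already in the support only changes the $\alpha_i$-coefficient. Second, $i_{j+1}\in\operatorname{supp}(\mu_j)\subseteq\operatorname{supp}(\mu)$. By induction on $j$, every $\mu_j$ has support inside $\operatorname{supp}(\mu)$, which gives $\operatorname{supp}(v\mu)\subseteq\operatorname{supp}(\mu)$. We also get that every letter of the word $s_{i_k}\cdots s_{i_1}$ is a vertex of $\operatorname{supp}(\mu)$.

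The remaining point, and the only mildly delicate one, is passing from this possibly non-reduced word to $\operatorname{supp}(v)$. By the deletion (exchange) condition for the Coxeter group $W$, a reduced expression for $v$ can be obtained from any expression by deleting letters. Hence some reduced expression for $v$ uses only reflections $s_i$ with $i\in\operatorname{supp}(\mu)$. Since the set of letters in a reduced expression does not depend on the choice of reduced expression, as stated in the lemma, we get $\operatorname{supp}(v)\subseteq\operatorname{supp}(\mu)$.

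I expect no genuine obstacle beyond this point. Termination uses only positivity of imaginary roots under $W$. The support containment uses only the sign condition on the off-diagonal Cartan entries.
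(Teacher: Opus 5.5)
Your proof is correct. The paper gives no proof of this lemma and only cites Marquis's Lemma 3.2; your height-descent argument, which reflects in a simple root $\alpha_i$ with $\mu_j(\alpha_i^\vee)>0$ (such an $i$ necessarily lies in $\operatorname{supp}(\mu_j)$) until $K_0$ is reached, and then extracts a reduced subword by the deletion condition to bound $\operatorname{supp}(v)$, is the standard proof of that cited result.
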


\begin{lemma}[Consequence of
{\cite[Lem.~3.4]{Marquis}}]\label{lem:separation}
Let $\gamma_1,\dots,\gamma_r\in\Phiimp$ satisfy
$\gamma_i+\gamma_j\notin\Phi$ whenever $i\ne j$.
There exists $w\in W$ such that $w\gamma_i\in K$ for every $i$, and
$\operatorname{supp}(w\gamma_1),\dots,\operatorname{supp}(w\gamma_r)$
are distinct connected components of their union.
In particular, if $r\ge2$, then $\gamma_1+\cdots+\gamma_r\notin\Phi$.
\end{lemma}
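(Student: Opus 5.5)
This lemma is stated as a consequence of \cite[Lem.~3.4]{Marquis}, so I would quote Marquis's simultaneous separation result and then deduce the two remaining claims: that each $w\gamma_i$ is fundamental, and that the sum is not a root.

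\textbf{What Marquis supplies.} As I recall it, \cite[Lem.~3.4]{Marquis} says the following. If $\gamma_1,\dots,\gamma_r\in\Phiimp$ are pairwise non-summable, then some $w\in W$ sends every $\gamma_i$ into $K_0$. Moreover, the supports of the $w\gamma_i$ are then pairwise either equal or orthogonal and disconnected, in the sense that no edge of the Dynkin diagram joins them. If the cited statement only gives the weaker form, I would recover the support statement from the $K_0$ property as follows.

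\textbf{Deriving the support statement.} Take $i\ne j$ and put $\mu=w\gamma_i$, $\nu=w\gamma_j$. Both lie in $\Phiimp$ by $W$-invariance, and $\mu+\nu\notin\Phi$ because $w$ preserves $\Phi$. Lemma~\ref{lem:negpairing} then forces $(\mu\mid\nu)=0$. Now expand $\nu=\sum_k c_k\alpha_k$ with $c_k\ge0$. Each term $(\mu\mid\alpha_k)$ is $\le0$ since $\mu\in K_0$, so every term $c_k(\mu\mid\alpha_k)$ with $c_k>0$ must vanish. Hence $(\mu\mid\alpha_k)=0$ for all $\alpha_k\in\operatorname{supp}(\nu)$.

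Suppose the supports touch: some vertex $\alpha_k\in\operatorname{supp}(\nu)$ is adjacent to, or lies in, $\operatorname{supp}(\mu)$.
\begin{itemize}
\item If $\alpha_k$ is adjacent to $\operatorname{supp}(\mu)$ but not in it, then $(\mu\mid\alpha_k)<0$, a contradiction.
\item So the two supports overlap. Pick $\alpha_k\in\operatorname{supp}(\mu)\cap\operatorname{supp}(\nu)$.
\end{itemize}
In the overlapping case I would rule out the configuration directly. The idea is to show $(\mu\mid\nu)<0$ whenever the supports overlap and both vectors are in $K_0$ with connected supports, unless the supports coincide on an affine-type component, where $\mu$ and $\nu$ are proportional to $\delta$. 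In that last case $\mu+\nu$ is a multiple of $\delta$, hence a root by Lemma~\ref{lem:multiples}, which is again a contradiction.

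This overlapping case is the main obstacle. It is exactly the content of Marquis's lemma, so I would cite it rather than reprove it. The upshot is that the supports are pairwise disjoint and non-adjacent, so they are distinct connected components of their union. Each $w\gamma_i$ lies in $K_0$ and has connected support, so it lies in $K$ by the description of $\Phiimp$ quoted earlier.

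\textbf{Final claim.} Let $r\ge2$ and suppose $\eta=\sum_i\gamma_i\in\Phi$. Then $w\eta=\sum_i w\gamma_i$ is a root, so it has connected support. But its support is the union of the supports of the $w\gamma_i$, since all coefficients are nonnegative. By the previous step this union has at least two connected components, a contradiction. Hence $\gamma_1+\cdots+\gamma_r\notin\Phi$.
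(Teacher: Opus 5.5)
Your proposal is correct and follows the paper's proof: both cite Marquis's Lemma~3.4 for the simultaneous separation, and both get the non-root claim from the disconnected support of $w(\gamma_1+\cdots+\gamma_r)$ together with the $W$-invariance of $\Phi$. Your fallback derivation from the weaker ``simultaneously in $K_0$'' form is also sound, and it can be shortened: if $w\gamma_i,w\gamma_j\in K_0$ had supports with connected union, then $w\gamma_i+w\gamma_j\in K_0$ would be a nonzero vector with connected support, hence a root, so the overlap and affine-$\delta$ analysis is unnecessary.
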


\begin{proof}
The simultaneous separation statement is \cite[Lem.~3.4]{Marquis}.
For the final assertion, the transformed sum has disconnected support,
so it is not a root. Since $W$ preserves $\Phi$, the original sum is
not a root either.
\end{proof}

The following addition lemma follows by adapting the argument
in the proof of \cite[Lem.~3.6(2)]{Marquis}. We include the proof
since the lemma is used in the proof of the main theorem.

\begin{lemma}\label{lem:connected-addition}
Let $\lambda\in K$ and $\mu\in\Phiimp$.
If $\operatorname{supp}(\lambda)\cup\operatorname{supp}(\mu)$
is connected, then $\lambda+\mu\in\Phiimp$.
\end{lemma}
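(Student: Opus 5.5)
We reduce to the case $\mu\in K$ by induction on the height of $\mu$, using Lemma~\ref{lem:fundamental-representative}. The plan is to track supports throughout.

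\emph{Base step.} If $\mu\in K$, then $\lambda+\mu\in K_0\setminus\{0\}$, because $K_0$ is a cone: the inequalities $(\gamma\mid\alpha_i)\le0$ are additive. Its support is $\operatorname{supp}(\lambda)\cup\operatorname{supp}(\mu)$, which is connected by hypothesis. By the standard description $\Phi_{\mathrm{im}}^+=W\cdot\{\gamma\in K_0\setminus\{0\}:\operatorname{supp}(\gamma)\text{ connected}\}$, we get $\lambda+\mu\in\Phiimp$.

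\emph{Reduction step.} If $\mu\notin K$, then $\mu\in\Phiimp$ has some simple root $\alpha_i$ with $(\mu\mid\alpha_i)>0$. Since imaginary roots satisfy $(\mu\mid\alpha_j)\le 0$ for $\alpha_j\notin\operatorname{supp}(\mu)$ adjacent or not, we have $\alpha_i\in\operatorname{supp}(\mu)$. Put $\mu'=s_i\mu$; it lies in $\Phiimp$ and has strictly smaller height.

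\emph{Two cases for $\alpha_i$.}
\begin{itemize}
\item Suppose $\alpha_i\notin\operatorname{supp}(\lambda)$. Then $(\lambda\mid\alpha_i)\le 0$. If $\alpha_i$ is not adjacent to $\operatorname{supp}(\lambda)$, this pairing is exactly $0$. In either case $s_i\lambda=\lambda-\lambda(\alpha_i^\vee)\alpha_i$ is $\lambda$ plus a nonnegative multiple of $\alpha_i$.
\item Suppose $\alpha_i\in\operatorname{supp}(\lambda)$. Then $\lambda(\alpha_i^\vee)\le0$ again, since $\lambda\in K$.
\end{itemize}
In both cases $s_i(\lambda+\mu)=s_i\lambda+\mu'$. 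This does not immediately give the form $\lambda+(\text{root})$, so instead of applying $s_i$ to $\lambda$ we try to keep $\lambda$ fixed.

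\emph{Following Marquis more closely.} The better route is to show directly that $\lambda+\mu$ is $W$-conjugate into $K_0$ with connected support. Run the reduction algorithm on $\nu=\lambda+\mu$: while some $\alpha_i$ has $(\nu\mid\alpha_i)>0$, replace $\nu$ by $s_i\nu$. Positivity is preserved because $(\lambda\mid\alpha_i)\le0$ forces $(\mu\mid\alpha_i)>0$ at the first step, and then $s_i\mu\in\Phiimp$. So the natural induction is on pairs $(\lambda_k,\mu_k)$ with $\lambda_k\in K_0$ and $\mu_k\in\Phiimp$, keeping the invariant that $\operatorname{supp}(\lambda_k)\cup\operatorname{supp}(\mu_k)$ is connected. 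Note that $s_i\lambda$ need not stay in $K_0$; it lies only in $W K_0$.

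Accordingly, we strengthen the statement to: $\lambda\in K_0\setminus\{0\}$ with connected support, or more generally $\lambda$ in the Tits cone shifted appropriately. Alternatively, use the cleaner Marquis argument. By Lemma~\ref{lem:fundamental-representative}, pick $v$ with $v\mu\in K$, $\operatorname{supp}(v)\subseteq\operatorname{supp}(\mu)$ and $\operatorname{supp}(v\mu)\subseteq\operatorname{supp}(\mu)$. Write $v=s_{i_1}\cdots s_{i_k}$, reduced. Then
\[
v\lambda=\lambda-\sum_j c_j\alpha_{i_j},\qquad c_j\ge0,
\]
and each step subtracts $\lambda(\cdot^\vee)\le0$ times a root, so in fact $v\lambda=\lambda+(\text{nonnegative combination of }\alpha_j,\ j\in\operatorname{supp}(\mu))$. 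Hence $v\lambda\in Q^+$. Its support contains $\operatorname{supp}(\lambda)$ and lies in $\operatorname{supp}(\lambda)\cup\operatorname{supp}(\mu)$.

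\emph{Main obstacle.} Now $v(\lambda+\mu)=v\lambda+v\mu$, where $v\mu\in K$. The remaining task is to show that $v\lambda+v\mu$ is a positive imaginary root, and the difficulty is that $v\lambda$ may fail to lie in $K_0$. I would handle this via Lemma~\ref{lem:negpairing} when $\lambda$ is itself imaginary in $K$:
\begin{itemize}
\item $v\lambda\in\Phiimp$, since $W$ preserves $\Phi_{\mathrm{im}}^+$.
\item By $W$-invariance, $(v\lambda\mid v\mu)=(\lambda\mid\mu)$.
\item If $(\lambda\mid\mu)<0$, Lemma~\ref{lem:negpairing} gives $\lambda+\mu\in\Phiimp$ directly.
\item Otherwise $(\lambda\mid\mu)=0$. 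Write $\mu=\sum m_j\alpha_j$, so $(\lambda\mid\mu)=\sum m_j(\lambda\mid\alpha_j)$ is a sum of nonpositive terms. Hence $(\lambda\mid\alpha_j)=0$ for every $j\in\operatorname{supp}(\mu)$, and therefore $v\lambda=\lambda$.
\end{itemize}
In the last case $v(\lambda+\mu)=\lambda+v\mu$ with $\lambda,v\mu\in K$, and this has connected support $\operatorname{supp}(\lambda)\cup\operatorname{supp}(v\mu)$. The point to verify is connectivity: $\operatorname{supp}(v\mu)$ may be smaller than $\operatorname{supp}(\mu)$. Suppose $\operatorname{supp}(\lambda)\cup\operatorname{supp}(v\mu)$ were disconnected. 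Then the vertices of $\operatorname{supp}(\mu)$ linking it to $\lambda$ would pair nonzero with $\lambda$, and those pairings vanish on $\operatorname{supp}(\mu)$. Making this step precise is the crux, and I expect to settle it by a support-tracking argument like Marquis's. The base step then finishes the proof.
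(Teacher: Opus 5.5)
Your final argument follows the paper's route: $(\lambda\mid\mu)\le 0$; Lemma~\ref{lem:negpairing} handles the negative case; in the orthogonal case, $(\lambda\mid\alpha_j)=0$ on $\operatorname{supp}(\mu)$ together with $\operatorname{supp}(v)\subseteq\operatorname{supp}(\mu)$ gives $v\lambda=\lambda$, reducing the problem to $\lambda+v\mu\in K_0$. However, you stop at the step that makes this work. You need $\lambda+v\mu$ to have connected support. You correctly note that $\operatorname{supp}(v\mu)$ may be strictly smaller than $\operatorname{supp}(\mu)$, so the hypothesis does not transfer, and then you defer the point to an unspecified ``support-tracking argument.'' Your sketch does not say which vertex pairs nonzero with $\lambda$, or why. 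Vertices inside $\operatorname{supp}(\lambda)$ can be orthogonal to $\lambda$: take $\lambda=\delta$ in affine type, which is orthogonal to every simple root. So the argument must use vertices outside $\operatorname{supp}(\lambda)$. As written, the case $(\lambda\mid\mu)=0$ is not proved.

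The missing idea is to prove the containment $\operatorname{supp}(\mu)\subseteq\operatorname{supp}(\lambda)$, using the hypothesis on $\mu$ itself rather than on $v\mu$. Write $\lambda=\sum_j l_j\alpha_j$. For $\alpha_i\notin\operatorname{supp}(\lambda)$, the pairing $(\lambda\mid\alpha_i)=\sum_{j\ne i}l_j(\alpha_j\mid\alpha_i)$ is a sum of nonpositive terms. It is strictly negative if $\alpha_i$ is adjacent to some vertex of $\operatorname{supp}(\lambda)$. Suppose $\operatorname{supp}(\mu)\not\subseteq\operatorname{supp}(\lambda)$. Then connectedness of $\operatorname{supp}(\lambda)\cup\operatorname{supp}(\mu)$ produces such an adjacent $\alpha_i$ in $\operatorname{supp}(\mu)\setminus\operatorname{supp}(\lambda)$. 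This contradicts $(\lambda\mid\alpha_i)=0$ on $\operatorname{supp}(\mu)$. Hence $\operatorname{supp}(v\mu)\subseteq\operatorname{supp}(\mu)\subseteq\operatorname{supp}(\lambda)$. So $\lambda+v\mu\in K_0$ has support exactly $\operatorname{supp}(\lambda)$, which is connected because $\lambda$ is a root. The standard description of $\Phiimp$ and applying $v^{-1}$ then finish the proof. With this step in place, your exploratory routes are unnecessary and can be deleted: the height induction, running the reduction algorithm on $\lambda+\mu$, and the claim $v\lambda-\lambda\in Q^+$ (true for $\lambda\in K_0$, but only loosely justified and never used).
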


\begin{proof}
Since $\lambda\in K_0$ and $\mu\in Q^+$,
$(\lambda\mid\mu)\le0$. If this pairing is negative,
Lemma~\ref{lem:negpairing} gives the result.
Suppose therefore that $(\lambda\mid\mu)=0$. Writing
$\mu=\sum_i m_i\alpha_i$, each term
$m_i(\lambda\mid\alpha_i)$ is nonpositive. Consequently,
\[
(\lambda\mid\alpha_i)=0
\qquad\text{for every }\alpha_i\in\operatorname{supp}(\mu).
\]
We claim that
$\operatorname{supp}(\mu)\subseteq\operatorname{supp}(\lambda)$.
Otherwise, connectedness of the union gives a vertex
$\alpha_i\in\operatorname{supp}(\mu)\setminus
\operatorname{supp}(\lambda)$ adjacent to a vertex in
$\operatorname{supp}(\lambda)$. Since $\alpha_i\notin\operatorname{supp}(\lambda)$,
$(\lambda\mid\alpha_i)$ is a sum of nonpositive terms.
At least one term is strictly negative, because $\alpha_i$ is
adjacent to a vertex in $\operatorname{supp}(\lambda)$.
Thus $(\lambda\mid\alpha_i)<0$, a contradiction.

By Lemma~\ref{lem:fundamental-representative}, choose $v\in W$
with $v\mu\in K$,
$\operatorname{supp}(v)\subseteq\operatorname{supp}(\mu)$, and
$\operatorname{supp}(v\mu)\subseteq\operatorname{supp}(\mu)$.
Every simple reflection in a reduced expression for $v$ therefore
corresponds to a vertex in $\operatorname{supp}(\mu)$ and fixes
$\lambda$, by the orthogonality just established.
Hence $v\lambda=\lambda$.
Both $\lambda$ and $v\mu$ belong to $K_0$, and
$\operatorname{supp}(v\mu)\subseteq\operatorname{supp}(\lambda)$.
Thus $\lambda+v\mu\in K_0$ has connected support
$\operatorname{supp}(\lambda)$, so it is a positive imaginary root.
Applying $v^{-1}$ gives $\lambda+\mu\in\Phiimp$.
\end{proof}

\section{Finite-sum closure}\label{main}

In this section, we extend the finite-sum closure property from semisimple and affine Kac--Moody
algebras \cite{DouglasRepka,DouglasAli} to all symmetrizable Kac--Moody
algebras.

\begin{theorem}[Finite-sum closure]\label{thm:main}
Let $\g$ be a symmetrizable Kac--Moody algebra. Then its root system
$\Phi$ has the finite-sum closure property.
\end{theorem}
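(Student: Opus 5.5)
We argue by induction on $n$, proving the stronger statement that whenever $\beta_1,\dots,\beta_n\in\Phi$ with $n\ge2$ and $\eta=\beta_1+\cdots+\beta_n\in\Phi$, the multiset $\{\beta_1,\dots,\beta_n\}$ can be partitioned into two nonempty groups whose partial sums are both roots, or whose partial sums combine through roots back to $\eta$. Closedness of $S$ and the inductive hypothesis then give $\eta\in S$. Concretely, it suffices to show that there exist $i\ne j$ with $\beta_i+\beta_j\in\Phi\cup\{0\}$ such that, after replacing $\beta_i,\beta_j$ by $\beta_i+\beta_j$ (or deleting the pair if the sum is $0$), we obtain a shorter list of elements of $S$ whose sum is still $\eta$. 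The case $n=1$ is trivial and $n=2$ is the definition of closed. A zero pair is harmless only if the remaining list is nonempty and $S$-valued. So I would first handle cancellations $\beta_i=-\beta_j$ by simply deleting both, which keeps all summands in $S$ and preserves $\eta$.

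\textbf{Case A: some summand is real.} Write $0<(\eta\mid\eta)$ or $\le 0$ and expand
\[
(\eta\mid\eta)=\sum_{i}(\beta_i\mid\eta).
\]
I would look for an index $j$ with $(\beta_j\mid \eta-\beta_j)<0$; then Lemma~\ref{lem:marquis} makes $\eta$ the sum of two roots (provided $\eta-\beta_j\in\Phi$). Instead, one uses the pairwise form. Consider $\sum_{i\ne j}(\beta_i\mid\beta_j)$. If some pair has $(\beta_i\mid\beta_j)<0$ with $\beta_i\ne\beta_j$, Lemma~\ref{lem:marquis} gives $\beta_i+\beta_j\in\Phi$, hence in $S$, and we reduce $n$. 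If $\beta_i=\beta_j$ is imaginary, then $2\beta_i\in\Phi$ by Lemma~\ref{lem:multiples}. So the obstruction is when all pairings between distinct summands are $\ge0$ (or coincide with real roots, with positive pairing). When a real $\beta_j$ has $(\beta_j\mid\eta)>0$, Lemma~\ref{lem:kac51} gives $\eta-\beta_j\in\Phi$. Then I try to show that $\eta-\beta_j$ is itself a sum of the remaining $n-1$ summands lying in $S$ (by induction, since $\eta-\beta_j=\sum_{i\ne j}\beta_i$), so $\eta-\beta_j\in S$ and $\eta=(\eta-\beta_j)+\beta_j\in S$. This uses the induction hypothesis directly and is the cleanest route. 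So whenever some real summand pairs positively with $\eta$ we are done. If $\eta$ is real, $(\eta\mid\eta)>0$ forces some $(\beta_j\mid\eta)>0$. If that $\beta_j$ is imaginary, I would use instead a real $\eta$-string argument: $(\eta\mid\beta_j)>0$ with $\eta$ real gives $\beta_j-\eta\in\Phi$ by Lemma~\ref{lem:kac51}, i.e.\ $-(\eta-\beta_j)\in\Phi$, so again $\eta-\beta_j=\sum_{i\ne j}\beta_i\in\Phi$ and induction applies.

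\textbf{Case B: $\eta$ imaginary, and no real summand pairs positively with $\eta$.} Wlog $\eta\in\Phiimp$ (apply $-1$, which preserves closedness of $-S$). If some summand $\beta_j$ has $\eta-\beta_j\in\Phi$, induction finishes as above. So the goal is to produce such a $j$. I would first combine summands greedily: repeatedly merge pairs whose sum is a root (each merge stays in $S$ by closedness), until no two remaining summands sum to a root or to $0$. If only one summand remains we are done. Otherwise, the remaining summands are pairwise non-summable. If all are positive imaginary, Lemma~\ref{lem:separation} says their sum is not a root, a contradiction. The main obstacle is mixed signs and real summands remaining after the greedy stage. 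For these I would move to a $W$-translate placing $\eta$ in $K$ (Lemma~\ref{lem:fundamental-representative}), so $(\eta\mid\alpha_i)\le0$ for all $i$. Then:
\begin{itemize}
\item For a remaining real summand $\beta_j$, non-summability with every other summand together with Lemma~\ref{lem:marquis} forces nonnegative pairings. I would aim to show $(\beta_j\mid\eta)>0$ or $\eta-\beta_j\in\Phi$ via Lemma~\ref{lem:kac51}.
\item For negative imaginary summands, I would use Lemma~\ref{lem:connected-addition}: $\eta$ plus a positive imaginary part with connected joint support is a root. This shows $\eta-\beta_j=\eta+(-\beta_j)\in\Phiimp$ whenever $-\beta_j\in\Phiimp$ and the supports meet or are adjacent, which they must, since otherwise the sum decomposes with disconnected support.
\end{itemize}
I expect this last step, controlling the positive-part supports in the mixed-sign configuration, to be where the real work lies.
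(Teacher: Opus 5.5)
\textbf{There is a genuine gap: the all-imaginary, mixed-sign configuration is not proved.} Your real cases are essentially the paper's, with one small loose end in Case~B that closes in a line. Once no merge via Lemma~\ref{lem:marquis} is possible, a real summand $\beta_j$ has $(\beta_j\mid\beta_k)\ge0$ for all $k\ne j$. Hence $(\beta_j\mid\eta)\ge(\beta_j\mid\beta_j)>0$, which is the paper's $\eta(\beta_m^\vee)\ge2$ argument.

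The all-imaginary, mixed-sign case is the heart of the theorem, and you explicitly leave it open. After merging, negating and translating so that $\eta\in K$, write $\eta=\lambda_1+\cdots+\lambda_p-\mu_1-\cdots-\mu_q$ with $\lambda_i,\mu_j\in\Phiimp$ pairwise non-summable. Lemma~\ref{lem:connected-addition} finishes once you find one $j$ with $\operatorname{supp}(\eta)\cup\operatorname{supp}(\mu_j)$ connected. Your justification (``otherwise the sum decomposes with disconnected support'') only works when $p=1$. In that case $\operatorname{supp}(\lambda_1)=\operatorname{supp}(\eta)\cup\bigcup_j\operatorname{supp}(\mu_j)$ is connected, so some $\mu_j$ meets or is adjacent to $\operatorname{supp}(\eta)$ — some, not every, as you suggest. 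For $p\ge2$ the identity $\sum_i\lambda_i=\eta+\sum_j\mu_j$ gives no connectivity, since a sum of several roots can have disconnected support. So nothing you have written rules out every $\mu_j$ lying away from $\operatorname{supp}(\eta)$.

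\textbf{How the paper closes it.} It first proves $p=1$. It separates the $\lambda_i$ by Lemma~\ref{lem:separation} and picks a component $\operatorname{supp}(\lambda_a)$ not containing $\operatorname{supp}(\eta)$. Comparing coefficients there gives $\lambda_a=\sum_{j\in J}\mu_j$. This contradicts either the absence of opposite pairs ($|J|=1$) or Lemma~\ref{lem:separation} ($|J|\ge2$). It then separates the $\mu_j$ into $K$ and adds them to $\eta$ one at a time via Lemma~\ref{lem:connected-addition}, contradicting non-summability.

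\textbf{How your own route could be completed.} Your $\eta\in K$ setup can also be finished directly, and more briefly. Suppose no $\operatorname{supp}(\mu_j)$ meets or is adjacent to $\operatorname{supp}(\eta)$. Then each connected $\operatorname{supp}(\lambda_i)$ lies either in $\operatorname{supp}(\eta)$ or in $\bigcup_j\operatorname{supp}(\mu_j)$. Comparing coefficients on $\operatorname{supp}(\eta)$ gives $\eta=\sum_{i\in I}\lambda_i$. Lemma~\ref{lem:separation} forces $|I|=1$, so $\eta$ is itself a summand and lies in $S$. Otherwise one application of Lemma~\ref{lem:connected-addition} gives $\eta-\beta_j=\eta+\mu_j\in\Phiimp$, and induction plus closedness finish.

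Either way, this support argument is the missing idea, not a routine detail.
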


\begin{proof} Let $S\subseteq\Phi$ be closed, and let $\beta_1,\dots,\beta_n\in S$ have sum $\eta\in\Phi$. We proceed by strong induction on $n$. The case $n=1$ is trivial and $n=2$ is the definition of closedness.

\vspace{0.5em}

If $\beta_i=-\beta_j$ for some $i\ne j$, removing this pair
leaves $n-2$ elements of $S$ summing to $\eta$, so induction
gives $\eta\in S$. We may therefore assume that
$\beta_i+\beta_j\ne0$ whenever $i\ne j$, and proceed in cases.

\vspace{0.5em}

\noindent \emph{Case 1: $\eta \in \Phire$.}  Since $(\eta\mid\eta) = \sum_i(\eta\mid\beta_i)>0$, some $(\eta\mid\beta_m)>0$. If $\beta_m$ is proportional to $\eta$, Lemma~\ref{lem:multiples} forces $\beta_m=\eta$, so $\eta=\beta_m\in S$. Otherwise Lemma~\ref{lem:kac51} implies $\eta-\beta_m\in\Phi$. Since $\eta-\beta_m \in \Phi$ is a sum of $n-1$ elements of $S$, the induction hypothesis then gives $\eta-\beta_m\in S$. Therefore, closure implies $(\eta-\beta_m)+\beta_m=\eta\in S$.

\vspace{0.5em}

\noindent \emph{Case 2: $\eta \in \Phiim$, some $\beta_m\in \Phire$.}  If $(\beta_m\mid\beta_k)<0$ for some $k\ne m$, Lemma~\ref{lem:marquis} gives $\beta_m+\beta_k\in\Phi$. Since $\beta_m,\beta_k\in S$ and $S$ is closed, $\beta_m+\beta_k\in S$. Replacing the two summands $\beta_m$ and $\beta_k$ by
$\beta_m+\beta_k$ yields $n-1$ elements of $S$ summing to $\eta$, so $\eta\in S$ by induction.

Otherwise, $(\beta_m\mid\beta_i)\ge 0$ for all $i$. Since $\beta_m$ is real, $(\beta_m\mid\beta_m)>0$, and hence, for each $j\ne m$,
\[
\beta_j(\beta_m^\vee)
=
\frac{2(\beta_j\mid\beta_m)}{(\beta_m\mid\beta_m)}
\ge 0.
\]
Therefore,
\[
\eta(\beta_m^\vee)
=
\beta_m(\beta_m^\vee)+\sum_{j\ne m}\beta_j(\beta_m^\vee)
=
2+\sum_{j\ne m}\beta_j(\beta_m^\vee)
\ge 2.
\]
Since $\eta$ is imaginary, it is non-proportional to the real $\beta_m$ (Lemma~\ref{lem:multiples}), so Lemma~\ref{lem:kac51} applies, yielding  $\eta-\beta_m \in \Phi$. Since $\eta-\beta_m\in \Phi$ can be written as the sum of $n-1$ elements of $S$, the induction hypothesis implies $\eta-\beta_m\in S$. By closure $\eta=(\eta-\beta_m)+\beta_m\in S$.

\vspace{0.5em}

\noindent \emph{Case 3: $\eta\in\Phiim$, all $\beta_i\in\Phiim$.}
If $\beta_i+\beta_j\in\Phi$ for some $i\ne j$, closedness and
induction give $\eta\in S$ by replacing this pair with its sum.
Thus we may assume
\begin{equation}\label{eq:no-pair-sums}
\beta_i+\beta_j\notin\Phi\cup\{0\}
\qquad(i\ne j).
\end{equation}
Replacing $S$ and all roots by their negatives if necessary,
assume $\eta\in\Phiimp$. Write
\[
\eta=\lambda_1+\cdots+\lambda_p-\mu_1-\cdots-\mu_q,
\qquad
\lambda_i,\mu_j\in\Phiimp,\qquad p+q=n.
\]
Here $p\ge1$. We may apply Weyl group elements to the entire
configuration, since they preserve closedness and the signs of
imaginary roots.

We first show that $p=1$. Suppose $p\ge2$. By
Lemma~\ref{lem:separation}, we may assume that the supports of the
$\lambda_i$ are distinct connected components of their union $U$.
The equality
\[
\lambda_1+\cdots+\lambda_p
=\eta+\mu_1+\cdots+\mu_q
\]
has only nonnegative coefficients. Hence $\eta$ and every $\mu_j$
are supported in $U$ and, by connectedness of their supports,
each lies in a single component of $U$.

Choose a component $\operatorname{supp}(\lambda_a)$ that does not
contain $\operatorname{supp}(\eta)$. Comparing coefficients there
gives
\[
\lambda_a=\sum_{j\in J}\mu_j,
\qquad
J=\{j:\operatorname{supp}(\mu_j)
\subseteq\operatorname{supp}(\lambda_a)\}.
\]
Since $\lambda_a\ne0$, $J$ is nonempty. If $|J|=1$, this gives
$\lambda_a=\mu_j$, contradicting the absence of opposite
summands. If $|J|\ge2$, Lemma~\ref{lem:separation} says that the
sum on the right is not a root, again a contradiction.
Therefore $p=1$.

Writing $\lambda=\lambda_1$, we now have
\[
\lambda=\eta+\mu_1+\cdots+\mu_q,
\qquad q=n-1\ge2.
\]
By Lemma~\ref{lem:separation}, we may assume that each $\mu_j$
belongs to $K$ and that their supports are mutually disjoint
with no edges between them. Since
\[
\operatorname{supp}(\lambda)
=\operatorname{supp}(\eta)\cup
\bigcup_{j=1}^q\operatorname{supp}(\mu_j)
\]
is connected, each $\operatorname{supp}(\mu_j)$ intersects or
is adjacent to $\operatorname{supp}(\eta)$.

Starting with $\eta$, successively add $\mu_2,\dots,\mu_q$.
At every step, the current sum has support containing
$\operatorname{supp}(\eta)$, so its union with the next support
is connected. Applying Lemma~\ref{lem:connected-addition} with the next
$\mu_j\in K$ as the fundamental imaginary root therefore gives
a positive imaginary root at each step. In particular,
\[
\lambda-\mu_1=\eta+\mu_2+\cdots+\mu_q\in\Phiimp.
\]
This contradicts \eqref{eq:no-pair-sums}, since $\lambda$ and
$-\mu_1$ are two summands in the transformed configuration.
Thus some pair must have a root sum, and induction completes
the proof.
\end{proof}

\section{Application to wide regular subalgebras}\label{sec:wide}

We recall the setup of \cite{DouglasAli}. A subalgebra $\mathfrak{s}\subseteq\mathfrak{g}$ is 
\emph{regular} if it is normalized by some Cartan subalgebra
of $\mathfrak{g}$. Fix such a Cartan subalgebra $\mathfrak{h}$.
Then $\mathfrak{s}$ admits a root space decomposition
\[
\mathfrak{s}
=
(\mathfrak{s}\cap \mathfrak{h})
\oplus
\bigoplus_{\alpha\in \Phi} (\mathfrak{s}\cap \mathfrak{g}_\alpha),
\]
and we associate to $\mathfrak{s}$ the subset
\[
T=\{\alpha\in\Phi\mid
\mathfrak{s}\cap\mathfrak{g}_\alpha\neq\{0\}\}.
\]

The associated subset $T$ need not be closed
\cite[Example~2.1]{DouglasAli}. For $S\subseteq\Phi$, let $[S]$ denote its closure; we call $S$ \emph{symmetric} if $S=-S$.
We also write
\[
\operatorname{cl}_{\mathrm{re}}(S)
\]
for the smallest real closed subset of $\Phi_{\mathrm{re}}$ containing
$S\cap\Phi_{\mathrm{re}}$, where a subset $R\subseteq\Phi_{\mathrm{re}}$
is \emph{real closed} if $\alpha,\beta\in R$ and
$\alpha+\beta\in\Phi_{\mathrm{re}}$ imply $\alpha+\beta\in R$.

A subalgebra is \emph{wide} if every irreducible integrable highest weight
$\g$-module remains indecomposable upon restriction.  For such a module $V$, there is a weight space decomposition 
\[
V=\bigoplus_{\mu\in \mathfrak{h}^*} V_\mu,~ \text{where}~ V_\mu=\{v\in V  \mid h v=\mu(h)v, ~\text{for all}~ h\in \mathfrak{h}\}.
\]

The \emph{$\mathfrak{s}$-weight graph} of $V$ has vertex set
\[
\{\mu\in\mathfrak h^*\mid V_\mu\neq0\},
\]
with an edge between $\lambda$ and $\mu$ whenever
$\mu-\lambda\in T\cup(-T)$.

For affine $\g$, \cite[Thm.~3.5]{DouglasAli} shows that
\[
\mathfrak{s}\text{ wide}
\quad\Longrightarrow\quad
[T\cup(-T)]=\Phi.
\]
This same property also holds for semisimple Lie algebras
\cite{DouglasRepka}. Theorem~\ref{thm:main} allows the same argument to
be extended to every symmetrizable Kac--Moody algebra.

\begin{theorem}[Necessary condition for wideness]
\label{thm:wide-necessary}
Let $\g$ be a symmetrizable Kac--Moody algebra, and let
$\mathfrak{s}\subseteq\g$ be a regular subalgebra with associated
subset $T\subseteq\Phi$. Then
\[
\mathfrak{s}\text{ wide}
\quad\Longrightarrow\quad
[T\cup(-T)]=\Phi.
\]
\end{theorem}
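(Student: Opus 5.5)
We argue by contraposition, following the strategy of \cite[Thm.~3.5]{DouglasAli}, with Theorem~\ref{thm:main} supplying the combinatorial input that was previously available only in finite and affine type. Set $S=[T\cup(-T)]$. It is closed by construction, and it is symmetric because $T\cup(-T)$ is symmetric and negation preserves closedness. Suppose $S\ne\Phi$. We will show that $\mathfrak{s}$ is not wide by producing an irreducible integrable highest weight module $V$ whose restriction to $\mathfrak{s}$ decomposes.

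The key step is the following claim. \emph{If $\mu$ and $\nu$ are weights of $V$ in the same connected component of the $\mathfrak{s}$-weight graph, and $\nu-\mu\in\Phi$, then $\nu-\mu\in S$.} A path in the weight graph writes $\nu-\mu$ as a finite sum of elements of $T\cup(-T)\subseteq S$. When that sum is a root, Theorem~\ref{thm:main} gives $\nu-\mu\in S$. This is exactly where the finite-sum closure property is needed.

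Next we use the structure of $\mathfrak{s}$-submodules. Since $\mathfrak{s}=(\mathfrak{s}\cap\h)\oplus\bigoplus_{\alpha\in T}(\mathfrak{s}\cap\g_\alpha)$, for any connected component $C$ of the weight graph the sum $V_C=\bigoplus_{\mu\in C}V_\mu$ is an $\mathfrak{s}$-submodule. Moreover $V=\bigoplus_C V_C$ as $\mathfrak{s}$-modules. Indecomposability therefore forces the weight graph to be connected. So it suffices to find $V$ whose weight graph is disconnected.

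Choose a root $\alpha\in\Phi\setminus S$; by symmetry $-\alpha\notin S$ as well. Suppose the weight graph of $V$ is connected. By the claim, every root occurring as a difference of two weights of $V$ lies in $S$. The plan is therefore to pick $V$ having two weights whose difference is $\alpha$.

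\emph{Real $\alpha$.} Take $V=L(\Lambda)$ with $\Lambda$ dominant regular, say $\Lambda=\rho$. By Lemma~\ref{lem:kac51}-type string arguments (integrability makes weights $W$-invariant and $\alpha$-strings unbroken), some weight $\mu$ has $\mu(\alpha^\vee)\ne0$. Then $\mu$ and $r_\alpha\mu=\mu-\mu(\alpha^\vee)\alpha$ are both weights, and the whole string between them consists of weights, so $\mu-\alpha$ is a weight with difference $\alpha$.

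\emph{Imaginary $\alpha$.} Since $\alpha$ and $-\alpha$ are both outside $S$, we may assume $\alpha>0$. For $\Lambda$ dominant with $\Lambda$ nonzero on the support of $\alpha$, the standard description of weights of $L(\Lambda)$ (via $\Lambda-\beta$ for $\beta\in Q^+$ with connected support meeting $\operatorname{supp}\Lambda$) should give that $\Lambda-\alpha$ is a weight. Taking $\Lambda=\rho$ again, this yields the weights $\Lambda$ and $\Lambda-\alpha$, whose difference is $\alpha$.

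In either case the claim forces $\alpha\in S$, a contradiction. Hence the weight graph is disconnected, so $V|_{\mathfrak{s}}$ decomposes and $\mathfrak{s}$ is not wide.

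The main obstacle is the imaginary case of this last step: showing that a single integrable module realizes every imaginary root as a weight difference. I would try citing Kac's weight criterion for $L(\rho)$, namely that $\rho-\beta$ is a weight when $\beta\in Q^+$ and $\rho-\beta$ lies in the convex hull of $W\rho$ intersected with $\rho+Q$. Failing that, I would reduce via Lemma~\ref{lem:fundamental-representative} to $\alpha\in K$ and check $\rho-\alpha$ directly.
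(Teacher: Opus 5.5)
Your framework is the paper's: the symmetric closed set $S$, the splitting $V=\bigoplus_C V_C$ over components of the weight graph, and Theorem~\ref{thm:main} to force a root difference inside one component into $S$. The proof as written has a gap in the imaginary case, though. You only assert that $\rho-\alpha$ ``should'' be a weight of $L(\rho)$ for $\alpha\in\Phiimp$, and neither proposed fix is carried out. The convex-hull description of weights is not among the tools used here and would need a citation valid for arbitrary symmetrizable type. Reducing to $\alpha\in K$ does not answer the question: $\rho-\alpha$ is then dominant, but you would still have to show it is a weight. So, as it stands, the case $\alpha\in\Phiim$, $\alpha\notin S$ is unproved.

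The missing idea is that you never need an arbitrary $\alpha\in\Phi\setminus S$. Theorem~\ref{thm:main} applies to $S$ itself. If $\Pi\subseteq S$, then every positive root, being a finite sum of simple roots, lies in $S$, and symmetry gives $\Phi^-\subseteq S$. Hence $S\neq\Phi$ already forces a simple root $\alpha_k\notin S$. The paper then takes $V=V(\omega_k)$, where $\omega_k$ and $\omega_k-\alpha_k$ are weights because $\omega_k(\alpha_k^\vee)=1$. A path between them would put $-\alpha_k$, hence $\alpha_k$, in $S$. With this reduction your real case becomes the trivial remark that $\Lambda-\alpha_k$ is a weight whenever $\Lambda(\alpha_k^\vee)>0$, and the imaginary case never arises.

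Your route can also be completed directly. Apply $e^{\mu}\mapsto e^{2\mu}$ to the denominator identity and insert it into the Weyl--Kac formula for $\Lambda=\rho$. This gives $\operatorname{ch}L(\rho)=e^{\rho}\prod_{\alpha\in\Phi^+}(1+e^{-\alpha})^{\dim\g_\alpha}$, so $\rho-\alpha$ is a weight of $L(\rho)$ for every positive root $\alpha$, real or imaginary. That is far heavier machinery than the one-line reduction to a simple root.
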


\begin{proof}
Set $S:=[T\cup(-T)]$. As in \cite[Thm.~3.5]{DouglasAli}, $S$ is
symmetric and closed. Suppose $S\ne\Phi$. Then $\Pi\nsubseteq S$:
indeed, if $\Pi\subseteq S$, every positive root is a finite sum of
simple roots, so Theorem~\ref{thm:main} gives $\Phi^+\subseteq S$,
and symmetry gives $\Phi^-\subseteq S$, contradicting $S\ne\Phi$.
Choose $\alpha_k\in\Pi\setminus S$.

Let $V=V(\omega_k)$ be the irreducible integrable highest weight
module of highest weight $\omega_k$. Since
$\omega_k(\alpha_k^\vee)=1$, the weight $\omega_k-\alpha_k$ occurs
in $V$. If $\omega_k$ and $\omega_k-\alpha_k$ belonged to the same
connected component of the $\mathfrak{s}$-weight graph, a path
between them would give
\[
-\alpha_k=\beta_1+\cdots+\beta_r,
\qquad
\beta_j\in T\cup(-T)\subseteq S.
\]
Theorem~\ref{thm:main} would then give $-\alpha_k\in S$, and symmetry
would give $\alpha_k\in S$, a contradiction. Thus the weight graph
is disconnected. As in \cite[Thm.~3.5]{DouglasAli}, its connected
components yield a nontrivial decomposition of $V$ into
$\mathfrak{s}$-submodules, contradicting wideness. Hence
$[T\cup(-T)]=\Phi$.
\end{proof}

\begin{remark}[Wideness criteria]\label{rem:wideness-both}
If $\mathfrak{h}\subseteq\mathfrak{s}$, then
\cite[Thm.~3.3]{DouglasAli} and Theorem~\ref{thm:wide-necessary} give
\[
\Pi\subseteq\operatorname{cl}_{\mathrm{re}}(T\cup(-T))
\quad\Longrightarrow\quad
\mathfrak{s}\text{ is wide}
\quad\Longrightarrow\quad
[T\cup(-T)]=\Phi.
\]
Thus the real-closure condition is sufficient for wideness,
while $[T\cup(-T)]=\Phi$ is necessary.
\end{remark}

\begin{remark}[A remaining question]
If $\mathfrak{h}\subseteq\mathfrak{s}$ and
$[T\cup(-T)]=\Phi$, must $\mathfrak{s}$ be wide?
An affirmative answer would make the necessary condition
in Theorem~\ref{thm:wide-necessary} sufficient as well.
\end{remark}

\section*{Acknowledgments}

Support for this project was provided by PSC-CUNY Award \#GR-00017525. The author used GPT-5.6 Sol and GPT-6.1 Sol to assist with manuscript structuring, editing,  mathematical discussion, and checking arguments. The author takes full responsibility for all mathematical results, arguments, and exposition in the manuscript.

\end{document}